\newtheorem{theorem}{Theorem}[section]
\newtheorem{lemma}[theorem]{Lemma}
\newtheorem{corollary}[theorem]{Corollary}
\theoremstyle{definition}
\newtheorem{assumption}[theorem]{Assumption}
\newtheorem{remark}[theorem]{Remark}
\numberwithin{equation}{section}
 \theoremstyle{plain}
 \numberwithin{equation}{section} %% Comment out for sequentially-numbered
 \numberwithin{figure}{section} %% Comment out for sequentially-numbered
 \theoremstyle{plain}
 \theoremstyle{plain}
 \theoremstyle{remark}
 \newtheorem*{acknowledgement*}{Acknowledgement}
\newcommand{\cA}{{\mathcal A}}
\newcommand{\cB}{{\mathcal B}}
\newcommand{\cC}{{\mathcal C}}
\newcommand{\cF}{{\mathcal F}}
\newcommand{\cG}{{\mathcal G}}
\newcommand{\cH}{{\mathcal H}}
\newcommand{\cL}{{\mathcal L}}
\newcommand{\cN}{{\mathcal N}}
\newcommand{\cZ}{{\mathcal Z}}
\newcommand{\te}{{\theta}}
\newcommand{\Om}{{\Omega}}
\newcommand{\ve}{{\varepsilon}}
\newcommand{\del}{{\delta}}
\newcommand{\gam}{{\gamma}}
\newcommand{\Gam}{{\Gamma}}
\newcommand{\sig}{{\sigma}}
\newcommand{\al}{{\alpha}}
\newcommand{\be}{{\beta}}
\newcommand{\ka}{{\kappa}}
\newcommand{\bbE}{{\mathbb E}}
\newcommand{\bbN}{{\mathbb N}}
\newcommand{\bbR}{{\mathbb R}}
\newcommand{\bbZ}{{\mathbb Z}}
\newcommand{\bbI}{{\mathbb I}}
\newcommand{\brF}{{\bar F}}
\begin{document}
\title[]{Stein's method for nonconventional sums}%
 \vskip 0.1cm
 \author{Yeor Hafouta \\
\vskip 0.1cm
 Institute  of Mathematics\\
Hebrew University\\
Jerusalem, Israel}%
\address{
Institute of Mathematics, The Hebrew University, Jerusalem 91904, Israel}
\email{yeor.hafouta@mail.huji.ac.il}%

\thanks{ }
\subjclass[2010]{60F05}%
\keywords{Central limit theorem; Berry–Esseen theorem; Mixing; Nonconventional setup; Stein's method}%%
\dedicatory{  }
 \date{\today}

\begin{abstract}\noindent
We obtain almost optimal convergence rate in the central limit theorem for
(appropriately normalized) "nonconventional" sums of the form 
$S_N=\sum_{n=1}^N (F(\xi_n,\xi_{2n},...,\xi_{\ell n})-\bar F)$. 
Here $\{\xi_n: n\geq 0\}$ is a sufficiently fast mixing vector
 process with some  stationarity conditions, $F$ is bounded H\"older continuous function and
$\bar F$ is a certain centralizing constant.  Extensions to more general functions $F$ will
be discusses, as well. Our approach here is based on the so called Stein's method, and
the rates obtained in this paper significantly improve the rates in \cite{HK1}.
Our results hold true, for instance, when $\xi_n=(T^nf_i)_{i=1}^\wp$ where $T$ is a topologically 
mixing subshift
of finite type, a hyperbolic diffeomorphism or an expanding transformation
taken with a Gibbs invariant measure, as well as in the case when $\{\xi_n: n\geq 0\}$ forms
a stationary and exponentially fast $\phi$-mixing sequence, which, for instance, holds true
when $\xi_n=(f_i(\Upsilon_n))_{i=1}^\wp$ where $\Upsilon_n$ is a Markov
chain satisfying the Doeblin condition considered as a stationary
process with respect to its invariant measure.

\end{abstract}
%\footnotetext[1]{}
\maketitle
\markboth{Y. Hafouta}{Stein's method for nonconventional sums}
\renewcommand{\theequation}{\arabic{section}.\arabic{equation}}
\pagenumbering{arabic}

\section{Introduction}\label{sec1}\setcounter{equation}{0}
Let  $\Phi$ be the
standard normal distribution function and let 
 $X_1,X_2,X_3...$ be a sequence of independent and identically distributed 
random variables such that $\bbE X_1=0$ and $0<\bbE X_1^2=\sig^2<\infty$. 
The classical Berry-Esseen theorem provides a uniform approximation of the error term in the
central limit theorem (CLT) for the sums $\hat S_n=\frac1{\sqrt n\sig}\sum_{k=1}^nX_k$, stating
 that for any $n\in\bbN$,
\begin{equation}\label{ClassB.E.}
\sup_{x\in\bbR}|F_n(x)-\Phi(x)|\leq\frac {C\bbE |X_1|^3}{\sqrt n}
\end{equation}
where $F_n$ is the distribution function of $\hat S_n$
(see Section 6 of Ch. III in \cite{Sh}) and $C>0$ is an absolute constant which by efforts of many
researchers was optimized by now to a number a bit less than 1/2.

During the last 50 years there were several extensions of the CLT for sums of weakly dependent
random variables and for martingales, including many estimates of error terms. Among the most used
methods in the case of weak dependence are Gordin's method for martingale approximation (see \cite{Gor}, \cite{Mc1} and \cite{HH2})
and Stein's method (see \cite{St1}).
 While Stein's method can yield close to optimal convergence rate (see \cite{St1} and \cite{Rin}),
martingale approximation method can not, since Berry-Esseen type estimates for martingales
do not yield (in general) optimal convergence rates even for sums of independent random variables
(see, for instance  \cite{HH2} and \cite{BoltExact}).

 Partially motivated by the research on nonconventional ergodic theorems (the term "nonconventional" 
comes from \cite{Fu}), probabilistic limit theorems for sums of the form 
$S_N=\sum_{n=1}^NF(\xi_{q_1(n)},\xi_{q_2(n)},...,\xi_{q_\ell(n)})$ have 
become a well  studied topic. Here $\{\xi_n, n\geq 0\}$ is a sufficiently fast mixing processes with some 
stationarity properties and $F$ is a function satisfying some regularity conditions.
 The summands here are nonstationary and long range dependent which makes
it difficult to apply standard methods.
 This line of research started  in \cite{Ki2}, where the author proved
a functional CLT for the normalized sums $N^{-\frac12}S_{[Nt]}$ taking the
characteristic function approximation approach.
In \cite{KV} the authors established  a functional CLT for more general $q_i$'s than in \cite{Ki2},
showing that the martingale approximation  approach is applicable. 
Their results included the case when $q_i(n)=in$ which was the original
motivation for the study of nonconventional averages (see \cite{Fu}).
In \cite{HK1} the authors estimated the  convergence rate of $\cZ_N=N^{-\frac12}S_N$ in the Kolmogorov (uniform)
metric towards its weak limit  under the assumptions of \cite{KV}. The proof relied on
 Berry-Esseen type results for martingales, which led to
estimates of order $N^{-\frac1{10}}\ln(N+1)$, which is far from optimal.
In the special case when $\xi_n$'s are independent the authors provided optimal rate of order
$N^{-\frac12}$ relying on Stein's method for sums of locally dependent random variables 
(see \cite{CS}).

The goal of this paper is to show that Stein's method is applicable for nonconventional sums when $\xi_n$'s
are weakly dependent, and to significantly improve the rates obtained in \cite{HK1}.
We first consider the case when $F$ is a bounded H\"older continuous function and
 $q_i(n)=in$ for any $1\leq i\leq\ell$ and $n\in\bbN$, and (in the self normalized case)
provide almost optimal upper bound of the form 
\begin{equation}\label{IntKol}
\sup_{x\in\bbR}|P(S_N\leq x\sqrt{\bbE S_N^2})-\Phi(x)|\leq CN^{-\frac12}\ln^2(N+1)
\end{equation}
assuming that $D^2>0$, where $D^2=\lim_{N\to\infty}\bbE S_N^2$. We also obtain rates of the form
\begin{equation}\label{IntKol2}
\sup_{x\in\bbR}|P(N^{-\frac12}S_N\leq x)-\Phi(xD^{-1})|\leq C_\epsilon N^{-\frac12+\epsilon}
\end{equation}
where $\epsilon>0$ is an arbitrary positive constant and $C_\epsilon$ is a constant
which in general depends on $\epsilon$. 
When $\{\xi_n: n\geq 0\}$ forms
a stationary and exponentially fast $\phi$-mixing sequence then, in fact,  we show that
(\ref{IntKol}) and (\ref{IntKol2}) hold true for any bounded function $F$ which is not necessarily continuous.
Convergence rates for more general functions and more general indexes $q_i(n)$'s will be discussed, as well.

As in \cite{HK1}, our results hold true when, for instance,
$\xi_n=T^nf$ where $f=(f_1,...,f_d)$, $T$ is a mixing subshift
of finite type, a hyperbolic diffeomorphism or an expanding transformation
taken with a Gibbs invariant measure, as well, as in the case when
$\xi_n=f(\Upsilon_n), f=(f_1,...,f_d)$ where $\Upsilon_n$
is a Markov chain satisfying the Doeblin condition considered as a
stationary process with respect to its invariant measure. In fact, 
any stationary and exponentially 
fast $\phi$-mixing sequence $\{\xi_n\}$ can be considered.
In the dynamical systems case each $f_i$ should be either H\" older
continuous or piecewise constant on elements of Markov partitions.
As an application we can consider  $\xi_n=((\xi_n)_1,...,(\xi_n)_\ell)$,
$(\xi_n)_j=\bbI_{A_j}(T^n x)$ in the dynamical systems case and
$(\xi_n)_j=\bbI_{A_j}(\Upsilon_n)$ in the Markov chain case where
$\bbI_{A}$ is the indicator of a set $A$. Let  $F=F(x_1,...,x_\ell)$,
$x_j=(x_j^{(1)},...,x_j^{(\ell)})$ be a bounded H\"older continuous function which identifies with 
the function $G(x_1,...,x_\ell)=x_1^{(1)}\cdot x_2^{(2)}\cdots x_\ell^{(\ell)}$ on the cube $([0,1]^\wp)^\ell$.
Let $N(n)$ be the number of $l$'s
between $0$ and $n$ for which $T^{q_{j}(l)}x\in A_j$ for $j=0,1,...,\ell$
(or $\Upsilon_{q_{j}(l)}\in A_j$ in the Markov chains case), where we set $q_{0}=0$, namely
the number of $\ell-$tuples of return times to
$A_j$'s (either by $T^{q_j(l)}$ or by $\Upsilon_{q_j(l)}$). Then
our results yield a central limit theorem with almost optimal convergence rate for the numbers $N(n)$.

\section*{Acknowledgement}
This paper is a part of the author's PhD thesis conducted at the Hebrew university of Jerusalem. 
I would like to thank my advisor Professor Yuri Kifer for suggesting to me the problem studied in this 
paper and for many helpful discussions.

\section{Preliminaries and main results}
\label{sec2}\setcounter{equation}{0}
Our setup consists of a $\wp$-dimensional stochastic process $\{\xi_n,n\geq0\}$
on a probability space $(\Omega,\cF,P)$ and a family
of sub $\sig-$algebras $\cF_{k,l}$, $-\infty\leq k\leq l\leq\infty$
such that $\cF_{k,l}\subset\cF_{k',l'}\subset\cF$ if $k'\leq k$
and $l'\geq l$. We will impose restrictions of the mixing coefficients
\begin{equation}\label{MixCoef1}
\phi(n)=\sup\{\phi(\cF_{-\infty,k},\cF_{k+n,\infty}): k\in\bbZ\}
\end{equation}
where we recall that for any two sub $\sigma-$algebras $\cG,\cH\subset\cF$, 
\[
\phi(\cG,\cH)=\sup\big\{\big|\frac{P(A\cap B)}{P(A)}-P(B)\big|: A\in\cG, B\in\cH,
 P(A)>0\}.
\]

In order to ensure some applications,
 in particular, to dynamical systems we will not assume that $\xi_n$
is measurable with respect to $\cF_{n,n}$ but instead impose conditions
on the approximation rates
\begin{equation}\label{2.3}
\beta_\infty(r)=\sup_{k\geq0}\|\xi_k-\bbE[\xi_k|\cF_{k-r,k+r}]\|_{L^\infty}
\end{equation}
where  $\|X\|_{L^\infty}$ denotes the essential supremum of the absolute value of a random variable $X$.

We do not require stationarity of the process $\{\xi_n, n\geq 0\}$, 
assuming only that the distribution of $\xi_n$ does not depend on $n$  and that
the joint distribution of $(\xi_n,\xi_m)$ depends
only on $n-m$ which we write for further
reference by
\begin{equation}\label{2.10}
\xi_n\thicksim\mu\,\,\,\text{ and }\,\, \big(\xi_n,\xi_m\big)\thicksim
\mu_{m-n}
\end{equation}
where $Y\thicksim\mu$ means that $Y$ has $\mu$ for its
distribution. 

Let $F=F(x_1,...,x_\ell):(\bbR^{\wp})^\ell\to\bbR,\,\ell\geq1$ be a bounded H\"older function
 and let $M>0$  and $\ka\in(0,1]$ be such that 
\begin{eqnarray}\label{GoodHold}
|F(x)|\leq M\,\,\text{ and }\,\,\,\,\,\,\,\,\,\,\,\,\,\\
|F(x)-F(y)|\leq M\sum_{i=1}^\ell|x_i-y_i|^\ka\label{GoodHold1}
\end{eqnarray}
for any $x=(x_1,...,x_\ell)$ and $y=(y_1,...,y_\ell)$ in $(\bbR^{\wp})^\ell$.
To simplify  formulas we assume the centering condition
\begin{eqnarray}\label{2.8}
\brF =\int F(x_1,x_2,...,x_\ell)d\mu(x_1)d\mu(x_2)... d\mu(x_\ell)=0
\end{eqnarray}
 which is not really a restriction since
we can always replace $F$ by $F-\brF$. The main goal of this paper is to prove a central limit theorem
with convergence rate for the normalized sums $(c_N)^{-1}S_N$, where
\[
S_N=\sum_{n=1}^NF(\xi_n,\xi_{2n},...,\xi_{\ell n})
\]
and either $c_N=N^{-\frac12}$ or $c_N=\sqrt{\bbE S_N^2}$.
\begin{assumption}\label{exp ass}
There exist $d>0$ and $c\in(0,1)$ such that 
\begin{equation}\label{exp psi}
\phi(n)+(\beta_\infty(n))^\ka \leq dc^n
\end{equation} 
for any $n\in\bbN$. 
\end{assumption}

The following theorem is a consequence of the arguments in
\cite{KV}, \cite{Ki3} and  \cite{HK1} and is formulated here for 
readers' convenience.
\begin{theorem}\label{D-thm}
 Suppose that Assumption
(\ref{exp ass}) is satisfied. Then the  limit $D^2=\lim_{N\to\infty}N^{-1}\bbE S_N^2$ exists and there exists 
$C_1>0$ which depends only on $\ell,c$ and $d$
such that 
\begin{equation}\label{D-Rate}
|\bbE S_N^2-D^2N|\leq C_1M^2N^{\frac12}
\end{equation}
for any $N\in\bbN$. Moreover,
 $D^2>0$  if and only if there exists no stationary in the wide sense process $\{V_n: n\geq 1\}$
such that 
\[
F(\xi^{(1)}_n,\xi^{(n)}_{2n},...,\xi_{\ell n}^{(\ell)})=V_{n+1}-V_n,\,\text{ P-almost surely}
\]
for any $n\in\bbN$, where $\xi^{(i)}$, $i=1,...,\ell$ are independent copies of $\xi=\{\xi_n: n\geq1\}$.
\end{theorem}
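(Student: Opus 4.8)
The plan is to establish the existence and the rate for $D^2$ via a direct second-moment computation, and then to characterize the degenerate case $D^2=0$ through a coboundary argument. First I would expand
\[
\bbE S_N^2=\sum_{n=1}^N\sum_{m=1}^N\bbE\big[F(\xi_n,\dots,\xi_{\ell n})F(\xi_m,\dots,\xi_{\ell m})\big]
\]
and, using the centering condition \eqref{2.8} together with the mixing Assumption~\ref{exp ass}, bound the individual covariances. The key mechanism is that when the gap $|m-n|$ is large, the two summands depend on blocks of the process $\{\xi_n\}$ that are widely separated in the indices $in$ and $jm$; exponential $\phi$-mixing together with the H\"older bound \eqref{GoodHold1} and the approximation rate \eqref{2.3} (raised to the power $\ka$) then forces the covariance to decay. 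The stationarity assumptions \eqref{2.10} are what let me write each covariance as a function depending only on index differences, so that after summing along diagonals the leading term is linear in $N$ and defines $D^2$.

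The central technical step is to show that the covariances $\bbE[F(\xi_n,\dots,\xi_{\ell n})F(\xi_m,\dots,\xi_{\ell m})]$ are summable in the difference $n-m$ with an explicit geometric control. Here the nonconventional structure is the main obstacle: the indices $n,2n,\dots,\ell n$ and $m,2m,\dots,\ell m$ can interleave in complicated ways, so one cannot simply appeal to a single mixing split. The standard device, following \cite{KV} and \cite{HK1}, is to approximate $F(\xi_n,\dots,\xi_{\ell n})$ by conditioning on a $\sigma$-algebra generated by the coordinates near the \emph{largest} index $\ell n$ (using \eqref{2.3}) and to exploit that for $m$ in a suitable range the blocks separate; one then iterates over the $\ell$ levels. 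Keeping careful track of the $N^{1/2}$ boundary contributions—coming from the finitely many index pairs that remain close—yields the rate $|\bbE S_N^2-D^2N|\le C_1M^2N^{1/2}$ in \eqref{D-Rate}, with the constant $C_1$ depending only on $\ell,c,d$ through the geometric series in Assumption~\ref{exp ass}.

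For the characterization of $D^2>0$, I would argue by contraposition. If $D^2=0$, then $\bbE S_N^2$ stays bounded, and a standard Hilbert-space / coboundary argument shows that $F(\xi_n^{(1)},\dots,\xi_{\ell n}^{(\ell)})$, viewed in the space of the independent-copies representation $\xi^{(1)},\dots,\xi^{(\ell)}$, must be of the form $V_{n+1}-V_n$ for a wide-sense stationary sequence $\{V_n\}$. The passage to independent copies is natural because the nonconventional limiting variance $D^2$ is known (from \cite{KV}) to coincide with the variance of the corresponding sum in the independent-copies model, where the summands become a genuine stationary sequence and the coboundary dichotomy for stationary sequences applies. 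Conversely, if such a $V_n$ exists then $S_N$ telescopes up to a boundary term, forcing $\bbE S_N^2=O(1)$ and hence $D^2=0$. The delicate point is verifying that the degeneracy of the limiting variance is faithfully transported between the original process and its independent-copies surrogate; once that equivalence is in hand (which is exactly what the arguments of \cite{KV} and \cite{Ki3} provide), the coboundary criterion follows from the classical stationary theory.
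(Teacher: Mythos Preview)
Your proposal is essentially correct and aligns with the paper's approach: the paper gives no self-contained proof of this theorem at all, but simply defers to \cite{KV} for the existence of $D^2$, to the arguments of \cite{Ki3} for the rate \eqref{D-Rate}, and to Theorem~2.3 of \cite{HK1} for the coboundary criterion. Your sketch is a faithful summary of what those references actually do, and in that sense you have provided more than the paper itself.

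One imprecision worth flagging: in your first paragraph you write that the stationarity assumption \eqref{2.10} lets you express each covariance $\bbE[F(\xi_n,\dots,\xi_{\ell n})F(\xi_m,\dots,\xi_{\ell m})]$ as a function of $n-m$ alone, so that one can ``sum along diagonals''. This is not literally true for $\ell\ge 2$: the joint law of $(\xi_n,\xi_{2n},\dots,\xi_{\ell n},\xi_m,\dots,\xi_{\ell m})$ depends on all the gaps $in-jm$, which do not reduce to $n-m$. The argument in \cite{KV}/\cite{Ki3} does \emph{not} proceed by diagonal summation of the raw covariances; it uses the level decomposition $F=\sum_{i=1}^\ell F_i$ (conditioning out all but the $i$-th block), after which each $F_{i,n}$ is essentially a function of $\xi_{in}$ and the diagonal argument applies level by level, with cross-level terms shown to be $O(N^{1/2})$. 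You already gesture at this in your second paragraph (``iterate over the $\ell$ levels''), so this is a slip of phrasing rather than a real gap, but be careful not to present the diagonal summation as the mechanism for the full covariance array.
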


Next, recall that the Kolmogorov (uniform) metric is defined for each pairs of  distributions $\cL_1$ and $\cL_2$ on
$\bbR$ with distribution functions $G_1$ and $G_2$ by
\begin{eqnarray*}
d_K(\cL_1,\cL_2)=\sup_{x\in\bbR}|G_1(x)-G_2(x)|.
\end{eqnarray*}
For any random variable $X$ we denote its law by $\cL(X)$.
Our main result is the following theorem.
\begin{theorem}\label{thm3.3}
Suppose that Assumption (\ref{exp ass}) holds true and  that $D^2>0$. Set $s_N=\sqrt{\bbE S_N^2}$
and $Z_N=(s_N)^{-1}S_N$ when $s_N>0$ while when $s_N=0$ we set $Z_N=N^{-\frac12}S_N$.
 Let $\cN(0,1)$ be the zero mean normal
distribution with variance  $1$.  
 Then  there exists a constant $C>0$ which depends  
 only on $\ell,d$ and $c$ such that 
\begin{equation}\label{Kol-Opt}
d_K(\cL(Z_N),\cN(0,1))
\leq C\max(1,\rho^3)N^{-\frac12}\ln^2(N+1)
\end{equation}
for any $N\in\bbN$, where  $\rho=M D^{-1}$. Moreover, for any $\epsilon>0$ there
exists a constant $c_\epsilon>0$ which depends only on $\epsilon,c,d$ and $\ell$ so that for any $N\geq 1$, 
\begin{equation}\label{Kol-Opt1.1.}
d_K(\cL(N^{-\frac12}S_N),\cN(0,D^2))
\leq c_\epsilon \max(1,\rho^3)
N^{-\frac12+\epsilon}
\end{equation}
where $\cN(0,D^2)$ is the zero mean normal
distribution with variance  $D^2$.
When $\be_\infty(r_0)=0$ for some $r_0$
then (\ref{Kol-Opt}) and (\ref{Kol-Opt1.1.}) hold true  with constants $C$ and $c_\epsilon$ 
which depend also on $r_0$,
assuming only that $F$ is a bounded function satisfying (\ref{GoodHold}).
\end{theorem}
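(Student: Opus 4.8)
The plan is to run Stein's method for normal approximation in the Kolmogorov metric, reducing the whole problem to a bound, uniform in $x\in\bbR$, on $\bbE[f_x'(Z_N)-Z_Nf_x(Z_N)]$, where $f_x$ is the bounded Lipschitz solution of the Stein equation $f'(w)-wf(w)=\bbI_{\{w\le x\}}-\Phi(x)$. Before touching the dependence I would carry out an \emph{approximation step}: writing $X_n=F(\xi_n,\dots,\xi_{\ell n})$ and $Z_N=s_N^{-1}\sum_{n\le N}X_n$, I replace every $\xi_k$ by $\bbE[\xi_k\mid\cF_{k-r,k+r}]$ for a window radius $r=r_N$ of order $\ln N$. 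By the H\"older bound (\ref{GoodHold1}) and Assumption \ref{exp ass}, which forces $(\be_\infty(r))^\ka\le dc^r$, each replacement costs $O(c^{\ka r})$, so choosing $r_N=A\ln N$ with $A$ large makes the cumulative error polynomially small and negligible against the target $N^{-1/2}$. This reduces matters to summands measurable on windows of width $O(\ln N)$, and is one source of the logarithmic factors in (\ref{Kol-Opt}).

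The second ingredient is the \emph{dependency structure} peculiar to the nonconventional indices. The key elementary fact is that a given coordinate $\xi_k$ enters the summand $X_n$ only when $k=in$ for some $1\le i\le\ell$, i.e.\ only for the at most $\ell$ integers $n=k/i$ with $i\mid k$; hence each $\xi_k$ is shared by at most $\ell$ summands. After windowing, $X_n$ is measurable with respect to the $\sig$-algebra generated by the blocks $\bigcup_{i=1}^\ell(in-r,in+r)$, so two summands $X_n,X_m$ are genuinely dependent (beyond an exponentially small $\phi$-mixing error) only when one block for $n$ comes within mixing range of one for $m$, i.e.\ when $|in-jm|\lesssim r$ for some $i,j\le\ell$. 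For each $n$ I would define the neighborhood $I_n$ of all such $m$, bound $|I_n|=O(\ln N)$, and---this is the crux---control the \emph{total} number of interacting pairs $\sum_n|I_n|$ and of interacting triples through the arithmetic of the progressions $\{in\}$, showing both are $O(N\,\mathrm{polylog}\,N)$. Exponential $\phi$-mixing then lets me treat blocks outside mixing range as independent up to errors summing to a negligible quantity.

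With these in hand the Stein computation is routine in structure. For each $n$ I split $Z_N=\hat Z_n+(Z_N-\hat Z_n)$, where $\hat Z_n$ deletes the contributions of $\{X_m:m\in I_n\}$, Taylor-expand $f_x(Z_N)$ about $f_x(\hat Z_n)$, and use that $X_n$ is nearly independent of $\hat Z_n$ while $\bbE X_n$ is exponentially small in $n$ (the smallest gap among $n,2n,\dots,\ell n$ being $n$, together with the centering (\ref{2.8})). Summing the zeroth- and first-order terms reconstructs $\bbE f_x'(Z_N)$ times $\bbE Z_N^2=1$ (the normalization together with Theorem \ref{D-thm} and $D^2>0$ give $s_N\asymp D\sqrt N$), and the Stein error is bounded by Chen--Shao type sums $\sum_n\big(\bbE|Y_n|\eta_n^2+\bbE|Y_n|\,\bbE\eta_n^2\big)$ with $Y_n=X_n/s_N$ and $\eta_n=s_N^{-1}\sum_{m\in I_n}X_m$, the passage to the indicator in $f_x$ being handled by the usual concentration estimates. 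Using $|F|\le M$, the bound $|I_n|=O(\ln N)$, the pair/triple counts above, and $s_N^{-3}\asymp(D\sqrt N)^{-3}$, these collapse to $C\rho^3N^{-1/2}\ln^2(N+1)$ with $\rho=MD^{-1}$, which is (\ref{Kol-Opt}).

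The passage to (\ref{Kol-Opt1.1.}) and the discontinuous case are comparatively soft. To replace the self-normalization by $N^{-1/2}$ I compare $\cN(0,1)$ with $\cN(0,D^2)$ using the variance estimate (\ref{D-Rate}), $|\bbE S_N^2-D^2N|\le C_1M^2\sqrt N$: a Kolmogorov-metric comparison of two centered Gaussians whose variances differ by $O(N^{-1/2})$ costs $O(N^{-1/2})$, and re-optimizing $r_N$ absorbs the logarithms into the arbitrarily small loss $N^{-1/2+\epsilon}$. Finally, when $\be_\infty(r_0)=0$ the approximation step is \emph{exact} with fixed radius $r_0$, so no continuity of $F$ is needed---boundedness (\ref{GoodHold}) suffices---and the whole scheme runs verbatim with $r_0$ in place of $r_N$, the constants now depending on $r_0$. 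I expect the genuine difficulty to lie entirely in the second paragraph: obtaining sharp-enough counts for the interacting pairs and triples of nonconventional summands and propagating the $\phi$-mixing errors, since a loose count there would inflate the rate beyond $N^{-1/2}\ln^2(N+1)$.
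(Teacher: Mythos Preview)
Your plan matches the paper's proof in its essential architecture: replace $\xi_k$ by $\bbE[\xi_k\mid\cF_{k-r,k+r}]$ with $r\asymp\ln N$, center, introduce the arithmetic neighborhoods $A_n=\{m\le N:\min_{i,j}|in-jm|\le l\}$, and run a Stein-type bound with residual dependence controlled by $\phi$-mixing. The paper does not carry out the Stein computation by hand; it invokes Rinott's theorem (Theorem 2.1 in \cite{Rin}), which produces three terms $R_1,R_2,R_3$ to estimate, plus explicit pieces of size $K_1lR$ and $K_1^2l^2NR^3$.

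There is one combinatorial ingredient you do not mention and which does the real work in the paper's argument. Knowing $|A_n|=O(l)$ handles $R_2$ and $R_3$ (your ``triples'' term $\sum_n\bbE|Y_n|\eta_n^2$ corresponds to $R_3$), but the variance-type term $R_1=4\|\sum_n\sum_{m\in A_n}(X_nX_m-\bbE X_nX_m)\|_2$ requires more: expanding the square, one must bound $\sum_{n_1,n_2}\bbE T_{n_1}T_{n_2}$, and for this the paper proves that for each fixed $n$ and each $k\ge0$ the set $\cA_n(k)=\{m:d_\ell(A_n,A_m)=k\}$ has cardinality at most $K_2l$ \emph{uniformly in $k$}. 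This is the second inequality in the paper's display~(\ref{A-Bound}) and is established by an explicit arithmetic argument showing $\cA_n(k)$ sits in a union of $O(\ell^6)$ intervals of length $O(l)$. Without it, the crude count $|\cA_n(k)|\le N$ together with $\sum_k\phi(k)<\infty$ gives only $R_1^2=O(l^2)$, which is useless. Your ``total number of interacting pairs $\sum_n|I_n|$'' does not capture this: $R_1$ involves correlations of the blocks $T_{n_1},T_{n_2}$ across \emph{all} pairs $(n_1,n_2)$, stratified by the gap $k=d_\ell(A_{n_1},A_{n_2})$. If instead you intend to avoid $R_1$ altogether via a direct Chen--Shao argument, the ``usual concentration estimates'' you invoke are the nontrivial step, since Chen--Shao's LD-structure assumes genuine independence outside the neighborhoods; making the concentration inequality work under $\phi$-mixing is exactly where the paper's route through Rinott and the $|\cA_n(k)|$ bound earns its keep.

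On~(\ref{Kol-Opt1.1.}) your route differs from the paper's and is in fact cleaner. The paper compares $D^{-1}N^{-1/2}S_N$ with $Z_N$ in $L^b$ using the moment bound $\|S_N\|_{L^b}\le\Gamma_bN^{1/2}$ from \cite{HK1} and then applies the second inequality of Lemma~\ref{lem3.17}, which is what forces the loss to $N^{-1/2+\epsilon}$. Your Gaussian--Gaussian comparison via~(\ref{D-Rate}) yields $d_K(\cN(0,s_N^2/N),\cN(0,D^2))=O(\rho^2 N^{-1/2})$ directly, so combined with~(\ref{Kol-Opt}) and scale-invariance of $d_K$ you actually get the stronger rate $O(\max(1,\rho^3)N^{-1/2}\ln^2(N+1))$ for~(\ref{Kol-Opt1.1.}) as well; no ``re-optimizing $r_N$'' is needed.
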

Note that $\be_\infty(0)=0$ when $\cF_{m,n}=\sig\{\xi_{\max(0,m)},...,\xi_{\max(0,n)}\}$ and
therefore when the processes $\{\xi_n: n\geq 0\}$ itself is exponentially
fast $\phi$-mixing (i.e. when (\ref{exp psi}) holds true with these $\sig-$algebras)
 we obtain (\ref{Kol-Opt}) for any bounded function $F$.

The outline of the proof goes as follows. Relying on \cite{Rin}, 
Stein's method becomes effective  for the sum $S_N$
 when $\{F_n: 1\leq n\leq N\}$,  $F_n=F(\xi_n,\xi_{2n},...,\xi_{\ell n})$
 are locally weak dependent  in the sense that there exist sets $A_n$  and
nonnegative  integers $d_{n,m}$, $1\leq n,m\leq N$ such that $n\in A_n$
$a_n=|A_n|$ and $b_n(k)=|\{1\leq m\leq N: d_{n,m}=k\}|,\,k\geq 0$ are small relatively to $N$,
$F_n$ and $\{F_s: s\not\in A_n\}$ are weakly dependent 
 and the random vectors $\textbf F_n=\{F_k: k\in A_n\}$ and $\textbf F_m=\{F_s: s\in A_m\}$
are weakly dependent when $d_{n,m}$ is sufficiently large.  
We first reduce the problem of approximation of 
the left hand side of (\ref{Kol-Opt}) to the case when $\xi=\{\xi_n: n\geq 0\}$ forms a sufficiently fast
$\phi$-mixing process. Then we consider the sets
\[
A_n=A_{n,N,l}=\{1\leq m\leq N: \min_{1\leq i,j\leq\ell}|in-jm|\leq l\}
\]
and the numbers $d_{n,m}=\min\{|ia-jb|: a\in A_n, b\in A_m, 1\leq i,j\leq\ell\}$
and show that $a_n$ and $b_n(k)$ defined above are of order $l$. 
In  Section \ref{sec3} we provide estimates which will show that the required type of the above 
weak dependence is satisfied, and then we take $l$ of the form $l=A\ln(N+1)$ to complete
the proof. 
 In fact, existing estimates on the left hand side of (\ref{Kol-Opt}) using Stein's method become
effective only after using
the expectation estimates obtained in Section \ref{sec3} even  for "conventional" sums of $\phi$-mixing sequences 
(i.e. in the case $\ell=1$), which is a particular
case of our setup, and so, in particular, we show that Stein's method is effective for such sums and
yields almost optimal convergence rate.

\section{Auxiliary results}\label{sec3}\setcounter{equation}{0}
The following result will be used.
\begin{lemma}\label{lem3.17} 
Let $X$ and $Y$ be two random variables defined on the same probability space. 
Let $Z$ be a random variable with density $\rho$ bounded from above by some constant $c>0$. Then,
\begin{eqnarray*}
d_K(\cL(Y),\cL(Z))\leq 3d_K(\cL(X),\cL(Z))+
4c\|X-Y\|_{L^\infty}\,\,\text{ and for any }\,\, b\geq 1, \\
d_K(\cL(Y),\cL(Z))\leq 3d_K(\cL(X),\cL(Z))+(1+4c)\|X-Y\|_{L^b}^{1-\frac 1{b+1}}.
\end{eqnarray*}
\end{lemma}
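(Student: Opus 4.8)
The plan is to bound $d_K(\cL(Y),\cL(Z))$ by comparing $Y$ to $X$ via a thin "buffer" strip around each threshold $x$, exploiting that $Z$ has a bounded density so that the probability of $Z$ landing in any short interval is small. Fix $x\in\bbR$ and a parameter $t>0$ to be chosen. For the first inequality I would take $t=\|X-Y\|_{L^\infty}$, so that $\{Y\leq x\}\subset\{X\leq x+t\}$ and $\{X\leq x-t\}\subset\{Y\leq x\}$ hold almost surely. This gives the two-sided sandwich
\begin{equation}\label{sandwich}
P(X\leq x-t)\leq P(Y\leq x)\leq P(X\leq x+t).
\end{equation}
The idea is then to compare each outer term with $P(Z\leq x)$ by inserting $P(Z\leq x\pm t)$ and controlling the replacement cost $|P(Z\leq x\pm t)-P(Z\leq x)|\leq c\,t$, which follows from the density bound $\rho\leq c$ since $P(x\leq Z\leq x+t)=\int_x^{x+t}\rho\leq ct$.

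Carrying this out, from the right half of (\ref{sandwich}) I would write
\begin{align*}
P(Y\leq x)-P(Z\leq x)&\leq \big(P(X\leq x+t)-P(Z\leq x+t)\big)+\big(P(Z\leq x+t)-P(Z\leq x)\big)\\
&\leq d_K(\cL(X),\cL(Z))+ct,
\end{align*}
and symmetrically the left half of (\ref{sandwich}) gives the matching lower bound with $-ct$. Taking the supremum over $x$ yields $d_K(\cL(Y),\cL(Z))\leq d_K(\cL(X),\cL(Z))+c\,t$ with $t=\|X-Y\|_{L^\infty}$. The factors $3$ and $4c$ in the statement are looser than this clean bound, so the inequality follows a fortiori; the slack presumably matches the form needed in the $L^b$ case below.

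For the second inequality the obstacle is that $X-Y$ is no longer bounded, so the deterministic sandwich (\ref{sandwich}) fails. The plan is to split on the event $\{|X-Y|>t\}$ for a threshold $t$ chosen in terms of $\|X-Y\|_{L^b}$. On the complement the previous argument applies with this $t$, while the bad event contributes at most $P(|X-Y|>t)\leq t^{-b}\|X-Y\|_{L^b}^b$ by Markov's inequality. Thus roughly
\[
d_K(\cL(Y),\cL(Z))\leq 3d_K(\cL(X),\cL(Z))+c\,t+t^{-b}\|X-Y\|_{L^b}^b,
\]
and optimizing the last two terms by setting $t=\|X-Y\|_{L^b}^{b/(b+1)}$ balances them into a single term of order $\|X-Y\|_{L^b}^{1-1/(b+1)}$, matching the exponent in the statement. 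The main care is bookkeeping the constants so that the coefficient comes out as $(1+4c)$ and the $d_K(\cL(X),\cL(Z))$ term carries the factor $3$; I expect the factor $3$ arises from handling the three comparisons (two density-replacement steps across the buffer plus the original $d_K$ term) simultaneously rather than from any deep phenomenon, so the genuine content is just the Markov truncation combined with the bounded-density estimate.
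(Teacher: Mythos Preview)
Your argument is correct and is precisely the standard one the paper invokes (it defers to Lemma~3.3 of \cite{HK2}, whose proof is exactly the sandwich-plus-Markov-truncation you describe, with the $L^\infty$ case obtained by taking $\delta=\|X-Y\|_{L^\infty}$ in place of the optimized $t$). In fact your computation gives the sharper bound $d_K(\cL(Y),\cL(Z))\leq d_K(\cL(X),\cL(Z))+(1+c)\|X-Y\|_{L^b}^{1-1/(b+1)}$, so the factors $3$ and $1+4c$ in the statement are slack and your speculation about ``three comparisons'' is unnecessary---no extra bookkeeping is needed.
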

The second inequality is proved in Lemma 3.3 in \cite{HK2}, while 
the proof of the first inequality
goes in the same way as the proof of that
Lemma 3.3, taking in (3.2) from there $\del=\|X-Y\|_{L^\infty}$.

Next, we  recall  that  (see \cite{Br}, Ch. 4) for any two sub 
$\sigma-$algebras $\cG,\cH\subset\cF$,
\begin{equation}\label{Phi-Rel}
2\phi(\cG,\cH)=\sup\{\|\bbE[g|\cG]-\bbE g\|_{L^\infty}\,:  g\in L^\infty(\Om,\cH,P),\,
\|g\|_{L^\infty}\leq1\}.
\end{equation}
The following lemma does not seem to be new but  for reader's convenience and completeness
we will sketch its proof here.
\begin{lemma}\label{thm3.11}
Let $\cG_1,\cG_2\subset\cF$ be two sub $\sigma-$algebras of $\cF$ and for $i=1,2$ let
$V_i$ be a $\bbR^{d_i}$-valued random $\cG_i$-measurable vector with distribution $\mu_i$.  Set
$d=d_1+d_2$, $\mu=\mu_1\times\mu_2$, denote by $\ka$
the distribution of the random vector $(V_1,V_2)$
and consider the measure $\nu=\frac12(\ka+\mu)$.  Let $\cB$  be the Borel $\sigma-$algebra on $\bbR^d$ and 
$H\in L^\infty(\bbR^d,\cB,\nu)$. Then $\bbE[H(V_1,V_2)|\cG_1]$ and $\bbE H(v,V_2)$ exist for $\mu_1$-almost
any $v\in\bbR^{d_1}$ and
\begin{equation}\label{Meas}
|\bbE[H(V_1,V_2)|\cG_1]-h(V_1)|
\leq2\|H\|_{L^\infty(\bbR^d,\cB,\nu)}\phi(\cG_1,\cG_2),\,\,P-a.s.
\end{equation}
 where $h(v)=\bbE H(v,V_2)$ and a.s. stands for almost surely.
\end{lemma}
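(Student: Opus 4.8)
The plan is to reduce the vector-valued statement to the scalar characterization of the $\phi$-mixing coefficient recorded in (\ref{Phi-Rel}), applied to a suitable one-parameter family of bounded functions indexed by the frozen value $v$ of $V_1$. First I would address the measurability and existence issues: since $H\in L^\infty(\bbR^d,\cB,\nu)$ and $\nu=\frac12(\ka+\mu)$, the function $H$ is defined $\ka$-a.e. and $\mu$-a.e., so both $\bbE[H(V_1,V_2)|\cG_1]$ (governed by the law $\ka$ of $(V_1,V_2)$) and $h(v)=\bbE H(v,V_2)$ (governed by $\mu_2$, the law of $V_2$) are well defined for $\mu_1$-almost every $v$; a Fubini-type argument on the product measure $\mu=\mu_1\times\mu_2$ shows that the null set of bad $v$'s has $\mu_1$-measure zero.

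The key step is to fix (for $\mu_1$-a.e. $v$) the \emph{section} function $g_v(\cdot)=H(v,\cdot)$, which is bounded by $\|H\|_{L^\infty(\bbR^d,\cB,\nu)}$ and is $\cB$-measurable on $\bbR^{d_2}$, and then consider the random variable $g_v(V_2)$, which is $\cG_2$-measurable since $V_2$ is $\cG_2$-measurable. Applying (\ref{Phi-Rel}) to $g=g_v(V_2)$ (after normalizing by $\|H\|_{L^\infty}$, so that the sup-norm constraint $\|g\|_{L^\infty}\le 1$ holds) yields
\begin{equation*}
\big\|\bbE[g_v(V_2)|\cG_1]-\bbE g_v(V_2)\big\|_{L^\infty}\leq 2\|H\|_{L^\infty(\bbR^d,\cB,\nu)}\,\phi(\cG_1,\cG_2),
\end{equation*}
and $\bbE g_v(V_2)=h(v)$ by definition. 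The remaining task is to verify the substitution identity $\bbE[H(V_1,V_2)|\cG_1]=\big(\bbE[g_v(V_2)|\cG_1]\big)\big|_{v=V_1}$ $P$-a.s., i.e. that conditioning on $\cG_1$ and then evaluating the section at $v=V_1$ reproduces the joint conditional expectation. This is where I expect the main obstacle to lie: it requires a disintegration/substitution lemma for conditional expectations of a function of two variables when the first is $\cG_1$-measurable, and one must be careful that the exceptional $\mu_1$-null set where $g_v$ or $h(v)$ misbehaves does not spoil the $P$-a.s. conclusion after substituting the random value $V_1$ (its image under $V_1$ is $P$-null since $V_1\sim\mu_1$).

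Once the substitution identity is in hand, evaluating the displayed uniform bound at $v=V_1$ gives exactly
\begin{equation*}
\big|\bbE[H(V_1,V_2)|\cG_1]-h(V_1)\big|\leq 2\|H\|_{L^\infty(\bbR^d,\cB,\nu)}\,\phi(\cG_1,\cG_2)\quad P\text{-a.s.},
\end{equation*}
which is (\ref{Meas}). The only subtlety worth flagging is the choice of the measure $\nu=\frac12(\ka+\mu)$ in the norm $\|H\|_{L^\infty(\bbR^d,\cB,\nu)}$: because $\nu$ dominates both $\ka$ and $\mu$, the single quantity $\|H\|_{L^\infty(\bbR^d,\cB,\nu)}$ simultaneously controls $\|g_v\|_{L^\infty(\mu_2)}$ (needed for applying (\ref{Phi-Rel})) and controls the essential sup of $H(V_1,V_2)$ under the joint law $\ka$ (needed so the left-hand side is finite $P$-a.s.), which is precisely why $\nu$ rather than $\mu$ or $\ka$ alone appears in the statement.
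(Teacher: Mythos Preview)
Your plan differs from the paper's route. The paper proves (\ref{Meas}) by approximation: first for ``step'' functions $G(v_1,v_2)=\sum_i \bbI_{A_i}(v_1)g_i(v_2)$ (where (\ref{Phi-Rel}) applies to each $g_i(V_2)$ and only finitely many exceptional null sets arise), then for uniformly continuous $H$ by uniform approximation with such step functions, and finally for general $H\in L^\infty(\nu)$ via Lusin's theorem. This detour exists precisely to dodge the substitution issue you try to confront head-on.

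Your direct freezing approach has a gap that is not quite the one you flagged. Applying (\ref{Phi-Rel}) to $g_v(V_2)$ for a \emph{fixed} $v$ yields an inequality valid $P$-a.s., i.e.\ outside an exceptional set $N_v\subset\Om$ that depends on $v$. When you then ``evaluate the displayed uniform bound at $v=V_1$'' you need $\{\om:\om\in N_{V_1(\om)}\}$ to be $P$-null, but this is an uncountable union of null sets and is not controlled by (\ref{Phi-Rel}) as stated. The null-set issue you did flag---the $\mu_1$-null set of bad parameters $v$ where $g_v$ or $h(v)$ is undefined---is a different and easier matter (handled, as you say, by $V_1\sim\mu_1$); the troublesome family is $\{N_v\}_v$ inside $\Om$, not the set of bad $v$'s inside $\bbR^{d_1}$. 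To make your argument work you would need the supremum over $g$ inside the essential supremum over $\om$, i.e.\ the total-variation characterization $2\phi(\cG_1,\cG_2)=\mathrm{ess\,sup}_\om\sup_{B\in\cG_2}|P(B|\cG_1)(\om)-P(B)|$, which gives a single null set good for all sections $g_v$ simultaneously. That characterization is available in \cite{Br}, but it is a strictly stronger input than (\ref{Phi-Rel}); the paper's approximation argument is designed to avoid invoking it.
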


\begin{proof}
Clearly $H$ is bounded $\mu$ and $\ka$ a.s.. Thus  
$\bbE[H(V_1,V_2)|\cG_1]$ exists  and existence of $\bbE H(v,V_2)$ ($\mu_1$-a.s.)
follows from the Fubini theorem.
Relying on (\ref{Phi-Rel}), inequality (\ref{Meas})
follows easily for functions 
of the form $G(v_1,v_2)=\sum_iI(v_1\in A_i)g_i(v_2)$ where $\{A_i\}$
is a measurable partition of the support of $\mu_1$. Any uniformly continuous function
$H$ is a uniform limit of functions of the above form, which implies that (\ref{Meas}) 
holds true for uniformly continuous functions. 
Finally, by Lusin's theorem (see \cite{Ru}), 
any  function $H\in L^\infty(R^d,\cB,\nu)$ is an $L^1$ (and a.s.) limit of a sequence 
$\{H_n\}$ of continuous functions with compact support satisfying  
$\|H_n\|_{L^\infty(\bbR^d,\cB,\nu)}\leq\|H\|_{L^\infty(\bbR^d,\cB,\nu)}$
 and (\ref{Meas}) follows for any  $H\in L^\infty(\bbR^d,\cB,\nu)$.
\end{proof}

\begin{corollary}\label{lem3.1}
Let $U_i$ be a $d_i$-dimensional random vector, $i=1,...,k$ defined on the
probability space $(\Om,\cF,P)$ from Section \ref{sec2}. Suppose that 
each $U_i$ is $\cF_{m_i,n_i}$-measurable, where $n_{i-1}<m_i\leq n_i<m_{i+1}$, 
$i=1,...,k$, $n_0=-\infty$ and $m_{k+1}=\infty$.
Let $\{\cC_i: 1\leq i\leq s\}$ be a  partition of $\{1,2,...,k\}$. Denote by $\mu_i$ the
distribution of the random vector $U(\cC_i)=\{U_j: j\in\cC_i\}$, $i=1,...,s$. 
Then, for any bounded
Borel function $H:\bbR^{d_1+d_2+...+d_k}\to\bbR$,
\begin{eqnarray}\label{Expec1}
&\,\,\,\,\,\,\,\,\,\big|\bbE H(U_1,U_2,...,U_k)-\int H(u_1,u_2,...,u_k)d\mu_1
(u^{(\cC_1)})d\mu_2(u^{(\cC_2)})...d\mu_s(u^{(\cC_s)})\big|\\
&\leq4\|H\|_\infty\sum_{i=2}^k\phi(m_i-n_{i-1})\nonumber
\end{eqnarray}
where $u^{(\cC_i)}=\{u_j: j\in\cC_i\}$, $i=1,...,s$ and $\|H\|_\infty$ stands for
the supremum of $H$. Namely, let
$U^{(i)}(\cC_i)$ be independent copies of the processes  $U(\cC_i)$, $i=1,...,s$.
Then
\begin{eqnarray*}
&\big|\bbE H(U_1,U_2,...,U_k)-\bbE H(U_1^{(j_1)},U_2^{(j_2)},...,U_k^{(j_k)})\big|\leq
4\|H\|_\infty\sum_{i=2}^k\phi(m_i-n_{i-1})
\end{eqnarray*}
where  $j_i$ satisfies that $i\in\cC_{j_i}$, for any $1\leq i\leq k$.
\end{corollary}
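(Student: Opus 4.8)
The engine of the proof is Lemma \ref{thm3.11}, together with the elementary monotonicity of the $\phi$-coefficient under coarsening of the $\sigma$-algebras. Indeed, if a collection of the $U_j$'s splits into a ``left'' part measurable with respect to $\cF_{-\infty,a}$ and a ``right'' part measurable with respect to $\cF_{b,\infty}$ with $a<b$, then $\phi(\cF_{-\infty,a},\cF_{b,\infty})\le\phi(b-a)$ by (\ref{MixCoef1}), and Lemma \ref{thm3.11} (applied with $\cG_1=\cF_{-\infty,a}$, $\cG_2=\cF_{b,\infty}$, viewing $H$ as a function of the two parts) lets us replace the joint law of the two parts by the product of their laws at a cost of $2\|H\|_\infty\phi(b-a)$ in the expectation $\bbE H$. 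Taking expectations in the pointwise a.s. bound (\ref{Meas}) and using Fubini's theorem shows that this replacement is precisely the passage from $\bbE H$ to the corresponding integral against the product measure; iterating the identification yields the ``Namely'' reformulation in terms of independent copies, so it suffices to prove the first displayed inequality (\ref{Expec1}).

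The plan is to peel off the blocks one at a time and run a telescoping argument over the $k-1$ gaps $m_i-n_{i-1}$, $i=2,\dots,k$. Concretely, I would build a finite chain of hybrid expectations interpolating between $\bbE H(U_1,\dots,U_k)$ and the grouped integral on the right of (\ref{Expec1}), where two consecutive members of the chain differ by a single application of the decoupling step above at one gap. Because (\ref{Meas}) is a pointwise a.s.\ estimate, the individual errors compose correctly under nested conditioning, and summing them produces a bound of the form $(\text{const})\,\|H\|_\infty\sum_{i=2}^k\phi(m_i-n_{i-1})$. The constant $4$ (rather than the $2$ coming from a single use of Lemma \ref{thm3.11}) reflects that some gaps must be charged from both of their sides, so that each gap enters the final sum at most twice.

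The step I expect to be the main obstacle is making the telescoping respect the partition $\{\cC_i\}$: the decoupling must render distinct groups independent while leaving each group's \emph{internal} joint law $\mu_i$ untouched. When every $\cC_i$ is an interval of consecutive blocks this is clean — one applies the decoupling step only at the gaps separating two different groups, never at the gaps internal to a group, and bounds the resulting sum by the full sum over all gaps. For a general partition, however, the groups may interleave, so the mates of a given block can lie on both sides of the gap one wishes to cut, and cutting the past from the future at that gap threatens to sever within-group ties that straddle it. Resolving this is the crux: one must isolate a block (or the appropriate sub-collection) from the other groups using the $\phi$-mixing at its two neighbouring gaps while preserving the ties to its own group, and order the peeling so that these two-sided estimates telescope. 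Verifying that every intermediate conditioning $\sigma$-algebra is still contained in a one-sided tail $\cF_{-\infty,a}$ or $\cF_{b,\infty}$, so that the coarsening bound $\phi(\cG_1,\cG_2)\le\phi(b-a)$ remains available at each step, is the delicate bookkeeping that carries the argument through.
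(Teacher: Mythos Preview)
Your diagnosis of the difficulty is accurate: for an interleaved partition, a cut at a gap via Lemma~\ref{thm3.11} separates the left part from the right part, and if a group $\cC_i$ has members on both sides, its internal joint law cannot survive the cut. But your proposed resolution---``isolate a block from the other groups using the $\phi$-mixing at its two neighbouring gaps while preserving the ties to its own group''---is not actually a method. Lemma~\ref{thm3.11} only separates past from future, so there is no mechanism for keeping a block tied to group-mates lying on both sides while simultaneously decoupling it from everything else. As written, the telescoping you sketch cannot be carried out for a general partition; you have named the obstacle but not overcome it.

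The paper sidesteps this by inserting the \emph{fully decoupled} product $\nu_1\times\cdots\times\nu_k$ (with $\nu_j$ the law of the single block $U_j$) as an intermediate comparison point. First one shows by a clean induction on $k$, peeling off the last block at each step, that
\[
\Big|\bbE H(U_1,\dots,U_k)-\int H\,d\nu_1\cdots d\nu_k\Big|\le 2\|H\|_\infty\sum_{i=2}^k\phi(m_i-n_{i-1});
\]
this step ignores the partition entirely and has none of the interleaving trouble. Second, one shows by induction on $s$ that the grouped integral $\int H\,d\mu_1\cdots d\mu_s$ also lies within $2\|H\|_\infty\sum_{i=2}^k\phi(m_i-n_{i-1})$ of the fully decoupled one: freeze the variables in $\cC_1,\dots,\cC_{s-1}$ and apply the first step to the blocks of $\cC_s$ alone (they are still $\cF_{m_j,n_j}$-measurable with the original gaps), which replaces $d\mu_s$ by $\prod_{j\in\cC_s}d\nu_j$ at cost $2\|H\|_\infty\sum_{j\in\cC_s}\phi(m_j-n_{j-1})$; the induction hypothesis then handles the remaining $s-1$ groups. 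The triangle inequality combines the two halves and accounts for the constant~$4$. The point is that one never tries to decouple groups from each other directly: one decouples everything, then re-couples within each group, so that no single application of Lemma~\ref{thm3.11} is asked to preserve a tie that straddles the cut.
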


\begin{proof}
Denote by $\nu_i$ the distribution of $U_i$, $i=1,..,k$.
We first prove by induction on $k$ that for any choice of $H$ and $U_i$'s with 
the required properties,
\begin{eqnarray}\label{Expec2}
&|\bbE H(U_1,U_2,...,U_k)-\int H(u_1,u_2,...,u_k)d\nu_1(u_1)d\nu_2(u_2)...d\nu_k(u_k)|\leq\\
&2\|H\|_\infty\sum_{i=2}^v\phi(m_i-n_{i-1}).\nonumber
\end{eqnarray}
Indeed, suppose that  $k=2$ and set  $V_1=U_1$, $V_2=U_2$, $h(u_1)=E[H(u_1,U_2)]$, 
$\cG_1=\cF_{-\infty,n_1}$ and $\cG_2=\cF_{m_2,\infty}$.
Taking expectation  in (\ref{Meas}) yields
\[
|\bbE H(U_1,U_2)-\bbE h(U_1)|\leq 2\|H\|_\infty\phi(m_2-n_1)
\]
which means that (\ref{Expec2}) holds true when $k=2$. Now, suppose
that (\ref{Expec2}) holds true for any $k\leq j-1$, $U_1,...,U_k$ with the required properties
 and any bounded Borel function $H:\bbR^{e_1+...+e_{k-1}}\to\bbR$, where $e_1,...,e_{k-1}\in\bbN$.
In order to deduce (\ref{Expec2}) for $k=j$,
set $V_1=(U_1,...,U_{j-1})$, $V_2=U_j$, $h(v_1)=\bbE H(v_1,U_j)$, $v_1=(u_1,...,u_{j-1})$, 
 $\cG_1=\cF_{-\infty,n_{j-1}}$ and $\cG_2=\cF_{m_j,\infty}$.
Taking expectation in (\ref{Meas}) yields 
\[
|\bbE H(U_1,U_2,...,U_j)-\bbE h(U_1,U_2,...,U_{j-1})|\leq 2\|H\|_\infty\phi(m_j-n_{j-1}).
\]
Applying the induction hypothesis with
the function $h$ completes the proof of  (\ref{Expec2}), since  $\|h\|_\infty\leq\|H\|_\infty$.
Next, we prove by induction on $s$ that for any choice of $k$, $H$, $U_i$'s with the required properties
and $\cC_1,...,\cC_s$,
\begin{eqnarray}\label{Exec3}
&\big|\int H(u_1,u_2,...,u_k)d\mu_1
(u^{(\cC_1)})d\mu_2(u^{(\cC_2)})...d\mu_s(u^{(\cC_s)})-\\
&\int H(u_1,u_2,...,u_k)d\nu_1(u_1)d\nu_2(u_2)...d\nu_k(u_k)\big|\leq
2\|H\|_\infty\sum_{i=2}^k\phi(m_i-n_{i-1}).\nonumber
\end{eqnarray}
 For $s=1$ this is just (\ref{Expec2}). Now suppose that (\ref{Exec3}) 
holds true for any $s\leq j-1$, and any real valued bounded Borel function $H$
defined on $\bbR^{d_1+...+d_k}$, where  $k$ and $d_1,...,d_k$ are some natural numbers. In order to prove 
 (\ref{Exec3})  for $s=j$, 
set $u^{(I)}=(u^{(\cC_1)},u^{(\cC_2)},...,u^{(\cC_{s-1})})$ and let the function $I$ be defined by
\begin{eqnarray}\label{I-fun}
 &I(u^{(I)})= \int H(u_1,u_2,....,u_k)\prod_{j\in\cC_s}d\nu_j(u_j).
\end{eqnarray}
Then
\begin{eqnarray}\label{Step}
&\int H(u_1,u_2,...,u_k)d\nu_1(u_1)d\nu_2(u_2)...d\nu_k(u_k)
=\int  I(u^{(I)})\prod_{j\not\in\cC_s}d\nu_j(u_j).
\end{eqnarray}
Let the function $J$ be defined by
\begin{eqnarray}\label{J-fun}
 &J(u^{(I)})= \int H(u_1,u_2,....,u_k)d\mu_s(u^{(\cC_s)}).
\end{eqnarray} 
Then by (\ref{Expec2}), for any $u^{(\cC_1)},...,u^{(\cC_{s-1})}$,
\begin{eqnarray}\label{I,J}
&|I(u^{(I)})-J(u^{(I)})|\leq
2\|H\|_\infty\sum_{i\in\cC_s}\phi(m_i-n_{i-1}).
\end{eqnarray}
 It is clear that $\|J\|_\infty\leq\|H\|_\infty$. 
Applying the induction hypothesis with the  function $J$ (considered as a function of the variable $u$)
and taking into account (\ref{Step}) and (\ref{I,J})  we obtain
(\ref{Exec3}) with $s=j$. We complete the induction. Inequality (\ref{Expec1}) follows by (\ref{Expec2}) and 
(\ref{Exec3}), and the proof of Corollary \ref{lem3.1} is complete.
\end{proof}

\begin{remark}\label{Rem}
In the notations of Corollary \ref{lem3.1}, let $Z_i, i=1,...,s$  be a bounded  
$\sig\{U(\cC_i)\}$-measurable random variable. Then each $Z_i$ has the form
$Z_i=H_i\big(U(\cC_i)\big)$ for some function $H_i$ which satisfies 
$\|H_i\|_\infty\leq\|Z_i\|_{L^\infty}$. By considering the function 
$H(u)=\prod_{i=1}^sH_i(u^{(\cC_i)})$, we obtain from (\ref{Expec1}) that,
\begin{eqnarray}\label{Exp4}
&\big|\bbE[\prod_{i=1}^sZ_i]-\prod_{i=1}^s\bbE Z_i\big|\leq 4\big(\prod_{i=1}^s\|Z_i\|_{L^\infty}\big)
\sum_{j=2}^k\phi(m_j-n_{j-1}).
\end{eqnarray}
In general we can replace $\|H\|_\infty$ appearing in the right hand side of (\ref{Expec1})
by some essential supremum norm of $H$ with respect to some measure which
has a similar but more complicated form as $\ka$ defined in Lemma \ref{thm3.11}.
\end{remark}

\section{Nonconventional CLT with almost optimal convergence rate via Stein's method}
\label{sec4}\setcounter{equation}{0}

First, the proof of Theorem \ref{D-thm} follows from arguments in  \cite{KV}, \cite{Ki3}, and 
 \cite{HK1}. Indeed,  relying on (2.25) in \cite{KV}, the conditions of \cite{HK1} and \cite{Ki3}
 hold true in our circumstances. Existence of $D^2$ follows from Theorem 2.2 in \cite{KV}, 
inequality (\ref{D-Rate}) follows from the arguments in \cite{Ki3} (first by considering the case when $M=1$)
 and the condition for positivity follows from Theorem 2.3 in
\cite{HK1}. 

Before proving Theorem \ref{thm3.3} we introduce the
following  notations. For any $a,b\in\bbR$ set 
\begin{eqnarray*}
d_\ell(a,b)=\min_{1\leq i,j\leq \ell}|ia-jb|
\end{eqnarray*}
 and for any
 $A,B\subset\bbR$ set
\begin{equation*}
dist(A,B)=\inf\{|a-b|: a\in A,b\in B\}\text{ and }
d_\ell(A,B)=\inf\{d_\ell(a,b): a\in A,b\in B\}.
\end{equation*}
Finally, for any $A_1,A_2,...,A_L\subset \bbR$, 
we will write $A_1<A_2<...<A_L$ if $a_1<a_2<...<a_L$ for any $a_i\in A_i$, $i=1,2,...,L$.

\subsection*{Proof of Theorem \ref{thm3.3}}
Suppose that $D^2>0$. We consider first the self normalized case.
Clearly, in the proof of (\ref{Kol-Opt})
we can assume that $M=1$.
For any 
$N\geq 1$ set $s_N=\sqrt{\bbE (S_N)^2}$. Then by (\ref{D-Rate}),
\begin{equation}\label{sN2 apprx }
(s_N)^2\geq D^2N-N^{\frac12}C_1.
\end{equation}
Let $N$ be so that $D^2N>N^{\frac12}C_1$. Then 
$s_N>0$ and we set 
$Z_N=\frac{S_N}{s_N}$. Let $l$ be of the form $l=4A\ln (N+1)$ where $A\geq1$ is 
a positive constant considered here as a parameter which will be chosen later.  
Set $r=[\frac l4]$ and 
\begin{equation*}
S_{N,r}=\sum_{n=1}^N F(\xi_{n,r},\xi_{2n,r},...,\xi_{\ell n,r})
\end{equation*} 
where $\xi_{m,r}=\bbE[\xi_m|\cF_{m-r,m+r}]$ for any $m\in\bbN$.
Then by (\ref{GoodHold1}) and (\ref{exp psi}),
\begin{equation}\label{I.1}
\|S_N-S_{N,r}\|_{L^\infty}\leq \ell N(\be_\infty(r))^\ka\leq
d\ell c^{-1}Nc^{\frac l4}=d\ell c^{-1}Nc^{A\ln(N+1)}\leq c_0(N+1)^{1+A\ln c} 
\end{equation}
where $c_0=d\ell c^{-1}$ and we also used our assumption that $M=1$. 
Next, let $n>l$, consider the random 
vectors $U_i=\xi_{in,r}$ and set $m_i=in-r$ and $n_i=in+r$, $i=1,...,\ell$.
 Then each $U_i$ is $\cF_{in-r,in+r}$-measurable and $m_i-n_{i-1}=n-2r\geq l-2r\geq \frac l2$.
Applying Corollary \ref{lem3.1} with the sets $\cC_i=\{in\}, i=1,...,\ell$
 we obtain
\begin{eqnarray*}
\big|\bbE F(\xi_{n,r},\xi_{2n,r}...,\xi_{\ell n,r})-
\bbE F(\xi_{n,r}^{(1)},\xi_{2n,r}^{(2)},...,\xi_{\ell n,r}^{(\ell)})\big|\\\leq
4\ell\phi(\frac l2)\leq 4\ell dc^{\frac l2}=c_1(N+1)^{2A\ln c}
\end{eqnarray*}
where $c_1=4\ell d$ and $\xi_{in,r}^{(i)}$'s are independent copies of $\xi_{in,r}$'s. Considering the
product measure of the laws of the vectors $(\xi_{in,r},\xi_{in}), i=1,...,\ell$, 
we can always assume that there (on a larger probability space) exist independent copies
 $\xi_{in}^{(i)}$ of the $\xi_{in}$'s such that 
$\|\xi_{in}^{(i)}-\xi_{in,r}^{(i)}\|_{L^\infty}\leq\beta_\infty(r)$ for any $i=1,2,...,\ell$. 
Thus by (\ref{GoodHold1}) and (\ref{exp psi}),
\begin{eqnarray*}
|\bbE F(\xi_{n,r}^{(1)},\xi_{2n,r}^{(2)},...,\xi_{\ell n,r}^{(\ell)})-
\bbE F(\xi_n^{(1)},\xi_{2n}^{(2)},...,\xi_{\ell n}^{(\ell)})|\\\leq \ell\big(\beta_\infty(r)\big)^\ka\leq
\ell dc^{-1}c^{\frac l4}=c_0(N+1)^{A\ln c} 
\end{eqnarray*}
and notice that $\bbE F(\xi_n^{(1)},\xi_{2n}^{(2)},...,\xi_{\ell n}^{(\ell)})=\bar F=0$. 
We conclude from (\ref{GoodHold}) and the above estimates  that
\begin{eqnarray}\label{I'}
|\bbE S_{N,r}|\leq |\bbE S_{l,r}|+
N(4\ell dc^{\frac l2}+d\ell c^{-1}c^{\frac l4})\\\leq
2l+5N\ell d c^{-1}c^{\frac l4}\leq 8A\ln(N+1)+5c_0(N+1)^{1+A\ln c}.\nonumber
\end{eqnarray}
We assume henceforth that $-A\ln c=A|\ln c|>2$ and set
$\bar S_{N,r}=S_{N,r}-\bbE S_{N,r}$. 
For any two random variables $X$ 
and $Y$ defined on the same probability space we have $|\bbE X^2-\bbE Y^2|\leq\|X+Y\|_{L^2}\|X-Y\|_{L^2}$
and therefore by (\ref{I.1}) and (\ref{D-Rate}),
\begin{eqnarray*}
|\bbE S_N^2-\bbE (S_{N,r})^2|\leq(2\|S_N\|_2+c_0(N+1)^{1+A\ln c})c_0(N+1)^{1+A\ln c}\\
\leq 3c_0(2+c_0+C_1+D)(N+1)^{\frac32+A\ln c}
\end{eqnarray*}
where we also used that $A|\ln c|>1$.
Next, by (\ref{I'}),
\begin{eqnarray*}
|\bbE S_{N,r}^2-\bbE (\bar S_{N,r})^2|=|\bbE S_{N,r}^2-\text{Var}S_{N,r}|=
|\bbE S_{N,r}|^2\\\leq 32A^2\ln^2(N+1)+25c_0^2(N+1)^{2+2A\ln c}
\end{eqnarray*}
and together with the previous inequality and our assumption that $A\ln c<-2$
we obtain that 
\begin{equation}\label{Yielding that}
|\bbE S_N^2-\bbE (\bar S_{N,r})^2|\leq c_2\ln^2(N+1)
\end{equation}
where $c_2=32A^2+25c_0^2+3c_0(2+c_0+C_1+D)$. Combining this wih (\ref{sN2 apprx }), it follows that 
\begin{equation}\label{Using...}
\bbE (\bar S_{N,r})^2\geq D^2N-N^{\frac12}C_1-c_2\ln^2(N+1).
\end{equation}
Let $N$ be so large such that the right hand side in the previous inequality is positive. Then
$\bbE (\bar S_{N,r})^2>0$. Let $\bar s_{N,r}$ be its 
positive square root and set 
 \[
\bar Z_{N,r}=\frac{\bar S_{N,r}}{\bar s_{N,r}}=\sum_{n=1}^NY_n
\]
where for each $n$,
\[
Y_n=Y_{n,N,r}=\frac{F(\xi_{n,r},\xi_{2n,r},...,\xi_{\ell n,r})-
\bbE F(\xi_{n,r},\xi_{2n,r},...,\xi_{\ell n,r})}
{\bar s_{N,r}}.
\]
Observe now that
\begin{eqnarray*}
\|Z_N-\bar Z_{N,r}\|_{L^\infty}\leq\|(s_N)^{-1}S_N-(\bar s_{N,r})^{-1}S_{N,r}\|_{L^\infty}+
|(\bar s_{N,r})^{-1}\bbE S_{N,r}|\leq\\
(s_N)^{-1}\|S_N-S_{N,r}\|_{L^{\infty}}
+|(s_N)^{-1}-(\bar s_{N,r})^{-1}|\|S_{N,r}\|_{L^\infty}+
|(\bar s_{N,r})^{-1}\bbE S_{N,r}|.
\end{eqnarray*}
The inequality $|x^{-1}-y^{-1}|=|x^2-y^2|(x^2y+y^2x)^{-1}$ holds true
for any $x,y>0$ yielding that
\[
|(s_N)^{-1}-(\bar s_{N,r})^{-1}|\leq\frac{c_2\ln^2(N+1)}{(s_N+\bar s_{N,r})s_N\bar s_{N,r}}:=e_2
\]
where we used (\ref{Yielding that}), and 
we conclude from (\ref{I.1}), (\ref{I'})  and the above estimates that 
\begin{eqnarray}\label{Good apprx 0}
\|Z_N-\bar Z_{N,r}\|_{L^\infty}\leq (s_N)^{-1}c_0(N+1)^{1+A\ln c}+2Ne_2+\\
(\bar s_{N,r})^{-1}
(8A\ln(N+1)+5c_0(N+1)^{1+A\ln c})\nonumber
\end{eqnarray}
where we used that $\|S_{N,r}\|_{L^\infty}\leq 2N$ (recall our assumption that $M=1$).
Next, using (\ref{sN2 apprx }), (\ref{Using...}) and that $\ln(N+1)\leq N^{\frac12}$
for any $N\geq 1$ we derive that 
$\min(s_N^2,\bar s_{N,r}^2)\geq \frac14D^2N$ when $3N^{\frac12}D^2\geq 8(C_1+c_2)$
and in this case 
\begin{equation}\label{Good apprx 1}
\|Z_N-\bar Z_{N,r}\|_{L^\infty}\leq c_4\max(D^{-1},D^{-3})N^{-\frac12}\ln^2(N+1)
\end{equation}
where $c_4=C_4(1+c_0+c_2+A)$, $C_4>1$ is some absolute constant and we also used that $N^{1+A\ln c}<1$ .

Next, let $N$ be sufficiently large so that  $3N^{\frac12}D^2\geq 8(C_1+c_2)$.
Then  by (\ref{GoodHold}) and the above lower bound of $(\bar s_{N,r})^2$,
\begin{eqnarray}\label{Y-Bound}
\|Y_n\|_{L^\infty}\leq 2(\bar s_{N,r})^{-1}\leq 4D^{-1}N^{-\frac12}.
\end{eqnarray}
For any $n=1,2,...,N$ set
\[
A_n=A_{n,l,N}=\{1\leq m\leq N: \min_{1\leq i,j\leq\ell}|in-jm|\leq l\}=\{1\leq m\leq N: d_\ell(n,m)\leq l\}%=
%\cup_{1\leq i,j\leq\ell} J_{i,j,N}(n)
\]
and for any $k\geq 0$ set 
\[
\cA_n(k)=\{1\leq m\leq N: d_\ell(A_n,A_m)=k\}.
\]
We claim that there exist constants $K_1$ and $K_2$ which depend only on $\ell$ such that for 
any $n=1,2,...,N$ and $k\geq 0$, 
\begin{eqnarray}\label{A-Bound}
&|A_n|\leq K_1l\,\,\text{ and }\,\,|\cA_n(k)|\leq K_2l.
\end{eqnarray}
Indeed,
%where $J_{i,j,N}(n)=[\al_{i,j}(n),\be_{i,j}(n)]\cap\{1,...,N\}$, $\al_{i,j}(n)=\lceil\frac {in}j-\frac lj\rceil$, 
%$\beta_{i,j}(n)=\lfloor\frac{in}j+\frac lj\rfloor$
%and $\lfloor x\rfloor$ and $\lceil x\rceil$
%stand for the lower and upper integer parts of a number $x$.s
since $A_n$ is contained in a union of at most $\ell^2$ intervals whose lengths do not exceed $2l+1$
we have $|A_n|\leq\ell^2(2l+1)$ and since $l\geq 1$ we can take $K_1=3\ell^2$. To prove the second
inequality, let $n$ and $m$ be such that $d_\ell(A_n,A_m)=k\geq 0$. 
Then there exist $1\leq i_s,j_s\leq\ell$, $s=1,2,3$
and $1\leq u,v\leq N$ such that $|i_3u-j_3v|=k$, $|i_1n-j_1u|\leq l$ and $|i_2m-j_2v|\leq l$.
When $j_3v-i_3u_3=k$, we deduce from the last two inequalities that 
\begin{eqnarray*}
\big|m-\frac{j_2i_3i_1}{i_2j_3j_1}n-\frac{j_2}{j_3i_2}k\big|\leq l\big
(\frac1{i_2}+\frac{j_2i_3}{i_2j_3j_1}\big)%Check whether the sign of $k$ should change
\end{eqnarray*}
and similar inequality holds when $j_3v-i_3u_3=-k$. Thus, when $n$ and $k$ are fixed 
the set $\cA_n(k)$ is contained in a union of $2\ell^6$ intervals whose lengths do not exceed
$2(\ell^2+1)l$, and the choice of $K_2=4\ell^6\cdot (\ell^2+2)$ is sufficient.

Now, set $\del=\del_{l,N}=\sum_{n=1}^N\sum_{m\in A_n}\bbE Y_nY_m$.
 Then
\begin{eqnarray}\label{Q2}
1=\text{Var}\bar Z_{N,r}=\bbE \big(\sum_{n=1}^NY_n\big)^2=\del+\gam
\end{eqnarray}
where 
$\gam=\gam_{l,N}=\sum_{n=1}^N\sum_{m\in \{1,...,N\}\setminus A_n}\bbE Y_nY_m$.
Let $1\leq n,m\leq N$ be such that $m\not\in A_n$.
Consider the sets of indexes $\Gam_k=\{jn: 1\leq j\leq\ell\}$ where $k=n,m$ and set 
$\Gam_{n,m}=\Gam_n\cup\Gam_m$.
By the definition of the set $A_n$ we have $dist(\Gam_n,\Gam_m)>l$. Therefore, the set $\Gam_{n,m}$ can be 
represented in the form 
\begin{eqnarray*}
\Gam_{n,m}=\bigcup_{t=1}^LB_t, \,\,B_1<B_2<...<B_L
\end{eqnarray*}
 where $L\leq2\ell$, each $B_t$ is either a subset of 
$\Gam_n$ or of $\Gam_m$ and $dist(B_t,B_{t-1})>l$.
Set 
\[
U_t=\{\xi_{s,r}: s\in B_t\},\,\, t=1,...,L.
\]
Since $r\leq\frac l4$, there exist numbers $n_t$ and $m_t$, $t=1,...,L$ such that
$n_{t-1}<m_t\leq n_t<m_{t+1}+\frac l2$, where $n_0=-\infty,m_{L+1}:=\infty$, and
each $U_t$ is measurable with respect to $\cF_{m_t, n_t}$. %In fact, we can take..
Set $\cC_1=\{1\leq t\leq L: B_t\subset \Gam_n\}$ and 
$\cC_2=\{1\leq t\leq L: B_t\subset \Gam_m\}$. Then $\{\cC_1,\cC_2\}$ is 
a partition of $\{1,2,...,L\}$, $Y_n$ is measurable
with respect to $\sig\{U_t: t\in\cC_1\}$ and $Y_m$ is measurable
with respect to $\sig\{U_t: t\in\cC_2\}$.
 Therefore, by (\ref{Exp4}) and 
(\ref{Y-Bound}) and since $\bbE Y_n=0$, 
\begin{eqnarray*}
|\bbE Y_nY_m|\leq 64\ell N^{-1}D^{-2}\phi(\frac l2)\leq 64d\ell D^{-2}N^{-1}c^{\frac l2}\leq
64d\ell D^{-2}N^{2A\ln c-1}
\end{eqnarray*}
implying that
\begin{equation}\label{gam good D rate}
|\gam|=|\del-1|=|\del-\text{Var}\bar Z_{N,r}|\leq 64d\ell D^{-2}N^{1+2A\ln c}.
\end{equation}
We assume now, in addition to the previous
restriction on $N$, that $64d\ell D^{-2}N^{-\frac12}<\frac12$. Then 
$\del>\frac12$ and so we can set $\sig=\sqrt\del$ and
$W=\frac {\bar Z_{N,r}}\sig$. Then $\sig^2\geq \frac12$ and using (\ref{gam good D rate})
we obtain
\begin{equation}\label{"old (4.11)"}
\|W-\bar Z_{N,r}\|_{L^\infty}\leq\|\bar Z_{N,r}\|_{L^\infty}|1-\frac1\sig|\leq
4\|\bar Z_{N,r}\|_{L^\infty}|\del-1|\leq 16D^{-3}N^{\frac32+2A\ln c}
\end{equation}
where we also used that $\bar s_{N,r}\geq \frac12DN^{\frac12}$.
Since $A|\ln c|>1$ the above right hand side does not exceed
$16D^{-3}N^{-\frac12}$ which together with (\ref{Good apprx 1}) and Lemma \ref{lem3.17}
yields that 
\begin{equation}\label{Kol1}
d_K(\cL(Z_N),\cN(0,1))\leq 3d_K(\cL(W),\cN(0,1))+c_5\max(D^{-1},D^{-3})N^{-\frac12}\ln^2(N+1)
\end{equation}
where $c_5=16c_4$.

In order to approximate $d_K(\cL(W),\cN(0,1))$, 
set $X_n=\sig^{-1}Y_n,\,n=1,2,...,N$. Then $W=\sum_{n=1}^NX_n$ 
and by (\ref{Y-Bound}) we have 
$\|X_n\|_{L^\infty}\leq R$, where $R=4N^{-\frac12}D^{-1}\sig^{-1}\leq 8N^{-\frac12}D^{-1}$.
Applying Theorem 2.1 in \cite{Rin}, using the equality (15) from there and taking into account 
(\ref{A-Bound}) we obtain that
\begin{equation*}
d_K(\cL(W),\cN(0,1))\leq
 R_1+R_2+R_3+K_1lR+2K_1^2l^2NR^3
\end{equation*}
where 
\begin{eqnarray*}
&R_1=4\|\sum_{n=1}^N\sum_{m\in A_n}(X_nX_m-\bbE X_nX_m)\|_2,\hskip2cm\\
&R_2=\sqrt{2\pi}\sum_{n=1}^N\bbE\big|\bbE[X_n|X_m: m\notin A_n]\big|,\\
&\hskip3cmR_3=2\|\sum_{n=1}^NX_n\big(\sum_{m\in A_n}X_m\big)^2\|_2\big(\|W\|_2+5\big)
\end{eqnarray*}
and $\|X\|_q^q=\bbE |X|^q=\|X\|_{L^q}^q$ for any random variable $X$.
Now we estimate $R_1,R_2$ and $R_3$. Set
$T_n=\sum_{m\in A_n}(X_nX_m-\bbE X_nX_m)$, $n=1,...,N$.
Then
\begin{eqnarray*}
R_1^2\leq16\sum_{n_1,n_2=1}^N\bbE T_{n_1}T_{n_2}.
\end{eqnarray*}
Let $n_1$ and $n_2$ be such that $d_\ell(A_{n_1},A_{n_2})=k>2r$. 
Consider the sets $\Gam_s=\{jm: m\in A_{n_s}, 1\leq j\leq\ell\}$, $s=1,2$. Then
$dist (\Gam_1,\Gam_2)=d_\ell(A_{n_1},A_{n_2})=k$. 
 Set $\Gam=\Gam_1\cup\Gam_2$. Both $\Gam_i$'s are  unions of at most $\ell^3$ intervals
 and therefore there exist sets $B_1,B_2,...,B_L$, $L\leq2\ell^3$ such that 
\begin{eqnarray*}
\Gam=\bigcup_{t=1}^LB_t,\,\,\,B_1<B_2<...<B_L
\end{eqnarray*}
where each $B_t$ is either a subset of $\Gam_1$ or a subset of $\Gam_2$ and 
$dist(B_t,B_{t-1})\geq k$, $t=2,...,L$.
Set 
\[
U_t=\{\xi_{a,r}: a\in B_t\},\,\,\, t=1,...,L.
\]
Then there exist numbers $m_t,n_t, t=1,2,...,L$ %(which are the largest and smallest members of $B_t$ $\pm r$)
 such that 
$n_{t-1}<m_t\leq n_t\leq m_{t+1}+k-2r$, $n_0=-\infty,m_{L+1}:=\infty$
 and each $U_t$ is $\cF_{m_t,n_t}$-measurable.
 Set $\cC_s=\{1\leq t\leq L: B_t\subset\Gam_s\}$, $s=1,2$. Then $\{\cC_1,\cC_2\}$ is a partition of
 $\{1,2,...,L\}$ and $T_{n_s}$, $s=1,2$ is measurable with respect to $\sig\{U_t: t\in\cC_s\}$.
Since $\|X_n\|_{L^\infty}\leq R$  we have $\|T_n\|_{L^\infty}\leq 2K_1lR^2$ (recalling (\ref{A-Bound}))
and thus by (\ref{Exp4}),
\begin{eqnarray*}
|\bbE T_{n_1}T_{n_2}|\leq 16K_1^2l^2R^4 L^2\phi(k-2r)\leq
64\ell^6K_1^2l^2R^4dc^{k-2r}
\end{eqnarray*}
where we used that $\bbE T_n=0$. Given $n_1$ and $k>2r$, the number of $n_2$'s
satisfying $d_\ell(A_{n_1},A_{n_2})=k$  is at most $K_2l$ (recalling (\ref{A-Bound})), while for any 
other $n_2$ and $k$ we can use the trivial upper bound 
$|\bbE T_{n_1}T_{n_2}|\leq\|T_{n_1}\|_{L^\infty}\|T_{n_2}\|_{L^\infty}\leq4K_1^2l^2R^4$.
Therefore, by the definitions of $R$ and $r$, 
\begin{eqnarray*}
R_1^2\leq64\ell^4K_1^2l^2R^4N
\big(K_2ld\sum_{k=2r+1}^Nc^{k-2r}+(2r+1)K_2l\big)\leq C_0l^4N^{-1}D^{-4}
\end{eqnarray*}
where $C_0$ is a constant which depends only on $c$ and $d$ and $\ell$. In order
to approximate $R_2$, let $1\leq n\leq N$ and set $\mathscr X_n=\{X_m:m\notin A_n\}$. 
Then,
\begin{equation}\label{R2.0}
\|\bbE[X_n|\mathscr X_n]\|_1^2\leq\|\bbE[X_n|\mathscr X_n]\|_2^2=
\bbE X_n\bbE[X_n|\mathscr X_n].
\end{equation}
Consider the sets $\tau_1=\{n,2n,...,\ell n\}$ and $\tau_2=\{jm: m\not\in A_n, 1\leq j\leq \ell\}$. Then
by the definition of $A_n$ we have $dist(\tau_1,\tau_2)>l$. Thus, 
the union $\tau_1\cup\tau_2$ can be written
as a union of at most $2\ell+1$ disjoint sets $B_1,B_2,...,B_L$ such that 
$B_1<B_2<...<B_L$, $dist(B_t,B_{t+1})>l$ and 
each $B_t$ is either a subset of $\tau_1$ of a subset of $\tau_2$. Consider the random vectors 
\[
U_t=\{\xi_{s,r}: s\in B_t\},\,\,t=1,...,L
\]
and the partition of $\{1,2,...,L\}$ into the sets $\{\cC_1,\cC_2\}$, where
 $\cC_s=\{1\leq t\leq L: B_t\subset\tau_s\}$, $s=1,2$.   Then $X_n$ is
measurable with respect to $\sig\{U_t: t\in\cC_1\}$ and $\bbE[X_n|\mathscr X_n]$
is measurable with respect to $\sig\{U_t: t\in\cC_2\}$.
Therefore by (\ref{Exp4}) and (\ref{exp psi}),
\begin{eqnarray}\label{R2.1}
&|\bbE[X_n\bbE[X_n|\mathscr X_n]]|\leq
%4(2\ell+1)R^2\phi(\frac l2)\leq 
4(2\ell+1) R^2dc^{\frac l2}
\end{eqnarray}
where we used that $r\leq\frac l4$, $\bbE X_n=0$ and that 
$\|\bbE [X_n|\mathscr X_n]\|_{L^\infty}\leq\|X_n\|_{L^\infty}\leq R$. 
We conclude from (\ref{R2.0}) and (\ref{R2.1}) that there exists a constant $C_0'$ which depends
only on $\ell$ such that
\begin{eqnarray}\label{R2}
R_2\leq C_0'N^{\frac12} D^{-1}d^\frac12c^{\frac l4}.
\end{eqnarray}
To estimate $R_3$, first observe that by the definition of $W$ and by (\ref{Q2})
we have  $\|W\|_2^2=\del^{-1}\|\bar Z_{N,r}\|_2^2=1+\del^{-1}\gam$ and therefore
$\|W\|_2\leq 2$, since $|\gam|<\frac12$ and $\del>\frac12$.
The first factor in the definition of $R_3$ is clearly bounded from above
by $2NK_1^2l^2R^3$ and we conclude that
\begin{eqnarray*}
R_3\leq C_4 l^2D^{-3} N^{-\frac12}
\end{eqnarray*}
for some constant $C_4$ which depends only on $\ell$. The estimate (\ref{Kol-Opt}) in Theorem \ref{thm3.3} follows now by taking any $A>\max(1,2|\ln c|^{-1})$, using (\ref{Kol1}) and the above estimates of $R_i$'s. 
Note that all the approximations in this section hold true only for $N$'s  
satisfying $3N\geq 8D^{-2}(C_1+c_2)$ and 
$64d\ell D^{-2}N^{-\frac12}<\frac12$,
but inequality (\ref{Kol-Opt}) follows for all other $N$'s
from the basic estimate $d_K(\cL(Z_N),\cN(0,1))\leq1$. We also remark that when
$\be_\infty(r_0)=0$
for some $r_0$ then taking $r\geq r_0$ we get $S_{N,r}=S_N$ and so there is no need
 for (\ref{GoodHold1}) to hold true.

Now we derive (\ref{Kol-Opt1.1.}) where again it is sufficient to consider the case when 
$M=1$. 
Let $0<\epsilon<\frac14$. 
First for any $b>1$,
\begin{eqnarray*}
&\|D^{-1}N^{-\frac12}S_N-Z_N\|_{L^b}=\|S_N\|_{L^b}|N^{-\frac12}D^{-1}-(s_N)^{-1}|\\
&=\|S_N\|_{L^b}\Big|\frac{\bbE (S_N)^2-D^2N}{D^2Ns_N+D(s_N)^2N^\frac12}\Big|
\end{eqnarray*}
where in the second equality we used that $|x^{-1}-y^{-1}|=|x^2-y^2|(xy^2+yx^2)^{-1}$ for any 
$x,y>0$.
By  Lemma 5.2 in \cite{HK1} for any $b>1$ there exits $\Gamma_b$
which depends only on $c,d,b$ and $\ell$ so that $\|S_N\|_{L^b}\leq \Gamma_bN^{\frac12}$.
Using  the previous estimates, for any $N$ so that 
$3N^{\frac12}D^2\geq 8(C_1+c_2)$ and $64d\ell D^{-2}N^{-\frac12}<\frac12$
 we have $s_N\geq\frac 12D$. Therefore,
\[
\|D^{-1}N^{-\frac12}S_N-Z_N\|_{L^b}\leq 8D^{-3}C_1\Gamma_bN^{-\frac12}
\]
where and we also used (\ref{D-Rate}). Applying the second statement 
of Lemma \ref{lem3.17} with  $b=\frac1{2\epsilon}-1$ and using (\ref{Kol-Opt}) completes the proof of 
(\ref{Kol-Opt1.1.}).
\qed

\subsection{Extensions}%Say that using Chena and Shoa's arguments we can..this will be done 
%in a different place...
\subsubsection*{Unbounded functions}
Let $M,\iota>0$, $\ka\in(0,1]$ and  $F:(\bbR^\wp)^\ell\to\bbR$ be a function satisfying
\begin{eqnarray*}
&|F(x)|\leq M(1+\sum_{i=1}^\ell |x_i|^\iota)\, \text{ and}\\
&|F(x)-F(y)|\leq M(1+\sum_{i=1}^\ell |x_i|^\iota+|y_i|^\iota)\sum_{i=1}^\ell|x_i-y_i|^\ka
\end{eqnarray*}
for any $x=(x_1,...,x_\ell)$ and $y=(y_1,...,y_\ell)$ in $(\bbR^\wp)^\ell$. For any $R>0$
set $F_R(x)=F(x)\bbI(|F(x)|\leq R)$.
Then, assuming that for some $p>\iota+1$,
\[
\gam_p=\|\xi_1\|_{L^{\iota p}}<\infty
\]
we can first approximate $F(\xi_n,\xi_{2n},...,\xi_{\ell n})$ by $F_R(\xi_{n,r},\xi_{2n,r}...,\xi_{\ell n,r})$ in the $L^p$-norm and then  use  Lemma 3.3 \ref{lem3.17}.
Applying Theorem \ref{thm3.3} with the function $F_R$ and taking $R$ with an appropriate dependence
on $N$ we obtain convergence rate of the form
$CN^{-\frac12+\ve_p}$, where $\ve_p$ depends on $p$ and satisfies $\lim_{p\to\infty}\ve_p=0$.
In fact, similar type of rates can be obtained assuming only that
$\phi(n)+\beta_q(n)\leq dn^{-\te}$ for some $q,d,\te>0$, where $\be_q$ is defined 
similarly to $\be_\infty$, but with the $L^q$ norm.

\subsubsection*{Nonlinear indexes}
Let $q_i, i=1,...,\ell$ be strictly increasing functions satisfying $q_i(\bbN)\subset\bbN$ 
which are ordered so that 
\[
q_1(n)<q_2(n)<...<q_\ell(n)\,\, \text { for any sufficiently large } n.
\]
Consider the sums 
\begin{eqnarray*}
&S_N=\sum_{n=1}^NF(\xi_{q_1(n)},\xi_{q_2(n)},...,\xi_{q_\ell(n)}).
\end{eqnarray*}
The proof of  Theorem \ref{thm3.3} proceeds essentially in the same way 
when all $q_i$'s are linear. For more general $q_i$'s, set
\begin{eqnarray*}
A_n=\{1\leq m\leq N: \min_{1\leq i,j\leq \ell}|q_i(n)-q_j(m)|\leq l\}.
\end{eqnarray*}
The proof of Theorem \ref{thm3.3}
will proceed similarly for the sums $S_N$ if we show that
limit $D^2=\lim_{N\to\infty} \bbE S_N^2$ exists, obtain convergence rate towards it
and upper bounds similar to the ones in (\ref{A-Bound}).
Suppose that $q_1,...,q_k$ are linear, for some $k<\ell$ and that $q_j, j\geq k$ are not.
When all $q_i$'s are polynomials,  existence of  $D^2$ is proved in \cite{HK2}.
 Though the limit
$D^2$ does not exist in general, if   $q_{j+1}$ grows faster then $q_j$ for $j>k$ in the sense of
(2.11) in \cite{KV}, 
then existence of $D^2$ follows from Theorem 2.3 in \cite{KV}.
Convergence rate towards $D^2$  when $q_i$'s  are polynomials
 can be obtained by proceeding similarly to the proof of Proposition 5.3 in \cite{HK2}. If, instead,
 $q_{j+1}(n^\al)-q_j(n)$ converges to $\infty$ as $n\to\infty$ for some $0<\al<1$
and all $j\geq k$, then convergence rate towards $D^2$ with some dependence
on $\al$ follows from the arguments in \cite{KV}.

Each $q_i(n)$ grows at least as fast as linearly  which implies that $|A_n|$ is of order $l$. 
When  all $q_i$'s are polynomials of the same degree then the limit 
$\lim_{n\to\infty} q_i^{-1}(q_j(n))/n$ exists for any $1\leq i,j\leq\ell$ and therefore the 
proof of the second upper bound in (\ref{A-Bound}) proceeds in a similar
way but with $\tilde d_\ell(a,b)=\min_{1\leq i,j\leq\ell}|q_i(a)-q_j(b)|$ in place of 
$d_\ell(a,b)$. When $q_i$'s do not necessarily have the same degree then 
beginning the summation in the definition of $S_N$ from  $cN^\gam$ for appropriate $\gam<1$ and $c>0$, 
guarantees that $|q_i(n)-q_j(m)|>CN$  when $\deg{q_i}\not=\deg{q_j}$ and 
$cN^\gam\leq n,m\leq N$.
Similar to the latter inequality is satisfied when  $\max(i,j)>k$ and $q_s$ grows faster than
$q_{s-1}$ for $s=k+1,...,\ell$ and so an appropriate version of (\ref{A-Bound})
follows in this situation, as well.


\begin{thebibliography}{Bow75}

\bibliographystyle{alpha}
\itemsep=\smallskipamount



\bibitem{BoltExact}
E. Bolthausen, {\em Exact convergence rates in some martingale central limit theorems}, Ann. Probab. 10 (1982), 672-688.


\bibitem{Br}
R.C. Bradley, {\em Introduction to Strong Mixing
Conditions}, Volume 1, Kendrick Press, Heber City, 2007.


\bibitem{CS}
L.H.Y Chen and Q.M. Shao, {\em  Normal approximation under local dependence}, 
Ann.Probab. 32 (2004), 1985-2028.


\bibitem{Fu}
H. Furstenberg, {\em Nonconventional ergodic averages},
Proc. Symp. Pure Math. 50 (1990), 43-56.



\bibitem{Gor}
M.I. Gordin, {\em On the central limit theorem for stationary processes}, 
Soviet Math. Dokl. 10 1174-1176.



\bibitem{HH2}
P. Hall, C.C. Heyde, {\em 
Rates of convergence in the martingale central limit theorem}, 
 Ann. Probab. 9 (1981) 395-404.


\bibitem{HK1}
Y. Hafouta and Yu. Kifer, {\em Berry-Esseen type estimates
for nonconventional sums}, Stoch. Proc. Appl. 126 (2016), 2430-2464.

\bibitem{HK2}
Y. Hafouta and Yu. Kifer, {\em Nonconventional polynomial CLT}, 
Stochastics, 89 (2017), 550-591.




\bibitem{Ki2}
Yu. Kifer, {\em Nonconventional limit theorems },
Probab. Th. Rel. Fields, 148 (2010), 71-106.



\bibitem{Ki3}
Yu. Kifer, {\em Strong approximations for nonconventional sums and almost sure limit theorems}, 
Stochastic Process. Appl., 123 (2013), 2286-2302.


\bibitem{KV}
Yu. Kifer and S.R.S Varadhan, {\em Nonconventional
limit theorems in discrete and continuous time via martingales},  Ann.
Probab., 42 (2014), 649-688.

\bibitem{Mc1}
D.L. McLeish, {\em Invariance principles for dependent variables}, Z.Wahrscheinlichkeitstheorie und Verw. Gebiete 
32 (1975), 165-178.

\bibitem{Rin}
Y. Rinott, {\em On normal approximation rates for certain sums of dependent random variables}, 
 J. Comput. Appl. Math., 55 (1994), 135-143.



\bibitem{Ru}
W. Rudin {\em Real and Complex Analysis},
 McGraw-Hill,  New York, 1987.


\bibitem{St1}
C. Stein, {\em A bound for the error in the normal approximation to the distribution of a
sum of dependent random variables },  
Proc. Sixth Berkeley Symp. Math. Statist. Probab, 2 (1972), 
583-602. Univ. California Press, Berkeley.

\bibitem{St2}
C. Stein, {\em Approximation Computation of Expectations}, 
 IMS, Hayward, CA (1986).


\bibitem{Sh}
N. Shiryaev, {\em Probability}, Springer-Verlag, Berlin, 1995.





\end{thebibliography}
\end{document}